\newtheorem{theorem}{Theorem}
\newtheorem*{theorem*}{Theorem}
\newtheorem{lemma}[theorem]{Lemma}
\newtheorem{proposition}[theorem]{Proposition}
\newtheorem{claim}[theorem]{Claim}
\newtheorem{question}[theorem]{Question}
\theoremstyle{definition}
\newtheorem{definition}[theorem]{Definition}
\newtheorem*{definition*}{Definition}
\newtheorem*{lemma*}{Lemma}
\numberwithin{equation}{section}
\numberwithin{theorem}{section}
\newcommand{\Z}{\mathbb{Z}}
\newcommand{\eps}{\varepsilon}
\DeclareDocumentCommand\Pr{ m g }{\ensuremath{
    {   \IfNoValueTF {#2}
      {\mathbb{P}\left[{#1}\right]}
      {\mathbb{P}\left[{#1}\middle\vert{#2}\right]}%
    }
}}
\DeclareDocumentCommand\E{ m g }{\ensuremath{
    {   \IfNoValueTF {#2}
      {\mathbb{E}\left[{#1}\right]}
      {\mathbb{E}\left[{#1}\middle\vert{#2}\right]}%
    }
}}
\def\aut{\mathrm{Aut}}
\begin{document}

\title[]{Characteristic measures of symbolic dynamical systems}

\author[]{Joshua Frisch and Omer Tamuz}
\address{California Institute of Technology}


\thanks{J.\ Frisch was funded by an NSF Grant (DMS-1464475). O.\ Tamuz was supported by a grant from the Simons Foundation (\#419427), a Sloan research fellowship, a BSF award (\#2018397), and an NSF CAREER award (DMS-1944153).}
\date{\today}

\begin{abstract}
  A probability measure is a characteristic measure of a topological dynamical system if it is invariant to the automorphism group of the system. We show that zero entropy shifts always admit characteristic measures. We use similar techniques to show that automorphism groups of minimal zero entropy shifts are sofic.
\end{abstract}

\maketitle

\section{Introduction}

Let $(G,X)$ be a topological dynamical system:  a jointly continuous action of a topological group $G$ on a compact Hausdorff space $X$. A homeomorphism $\varphi$ of $X$ is an automorphism of $(G,X)$  if $g \circ \varphi = \varphi \circ g$ for all $g \in G$. We denote by $\aut(G,X)$ the group of automorphisms, equipped with the compact-open topology. A Borel probability measure $\nu$ on $X$ is {\em invariant} if $g_*\nu = \nu$ for all $g \in G$.
\begin{definition}
  A Borel probability measure $\nu$ on $X$ is {\em characteristic} if $\varphi_*\nu = \nu$ for all $\varphi \in \aut(G,X)$.
\end{definition}
Note that characteristic measures are not necessarily invariant, and invariant measures are not necessarily characteristic. However, when $G$ is abelian then $G$ is a subgroup of $\aut(G,X)$, and hence every characteristic measure is $G$-invariant; this is not true for general $G$. When $G$ is amenable then $(G,X)$ admits invariant measures, and moreover, if there are characteristic measures, then there are characteristic invariant measures. Likewise, if $\aut(G,X)$ is amenable then there are characteristic measures, and if there are invariant measures then there are characteristic invariant measures. This follows from the fact that $G$ (resp., $\aut(G,X)$) acts affinely on the compact, convex set of characteristic (resp., invariant) measures.


In this paper we will focus on {\em symbolic dynamical systems}, or shifts, and restrict our attention to finitely generated $G$. Let $A$ be a finite alphabet. The {\em full shift} is the dynamical system $(G,A^G)$, where $A^G$ is equipped with the product topology and the action is by left translations. A {\em shift} $(G,\Sigma)$ is a subsystem of $(G,A^G)$, with $\Sigma$ a closed, $G$-invariant subset of $A^G$.

The automorphism groups of shifts are always countable~\cite{hedlund1969endomorphisms}. Even in the simplest case that $G=\Z$, these groups exhibit rich structure; for example $\aut(\Z,2^\Z)$ contains the free group on two generators, as well as every finite group (see, e.g.,~\cite{boyle1988automorphism}).

Some shifts $(\Z,\Sigma)$ obviously admit characteristic measures: these include uniquely ergodic shifts, shifts with a unique measure of maximal entropy, shifts with periodic points (which include all shifts of finite type), and shifts with amenable automorphism groups. But since $\aut(\Z,\Sigma)$ is in general non-amenable, it is not obvious that every $(\Z,\Sigma)$ admits a characteristic measure. Indeed, we do not know if this holds.

Our main result concerns zero entropy shifts. To define the entropy of a shift, let $N_\Sigma(F)$, the {\em growth function} of $\Sigma$, assign to each finite $F \subset \Z$ the cardinality of the restriction of $\Sigma$ to $F$. The entropy of $\Sigma$ is given by
\begin{align*}
    h(\Sigma) = \inf_r \frac{1}{r}\log N_\Sigma(\{1,2,\ldots,r\}).
\end{align*}
\begin{theorem}
\label{thm:zero-ent}
  Let $(\Z,\Sigma)$ be a shift with $h(\Sigma)=0$. Then $(\Z,\Sigma)$ admits a characteristic measure.
\end{theorem}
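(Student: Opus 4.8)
The plan is to build a characteristic measure as a weak-* limit of empirical measures along orbits, exploiting the fact that a zero-entropy shift has subexponential growth, so that for large windows the number of admissible patterns is small relative to the window size. Concretely, fix an increasing sequence of intervals $[-n,n]$ and, for each point $x \in \Sigma$, consider the empirical measure $\mu_n^x = \frac{1}{2n+1}\sum_{k=-n}^{n} \delta_{\sigma^k x}$, where $\sigma$ is the shift. These are not characteristic, so instead I would symmetrize over the automorphism group: since $\aut(\Z,\Sigma)$ is countable, enumerate it as $\varphi_1,\varphi_2,\ldots$, and form averages $\nu_n = \frac{1}{m_n}\sum_{i=1}^{m_n}(\varphi_i)_* \mu_n^{x_n}$ for a carefully chosen truncation $m_n$ and base points $x_n$. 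The hope is that a suitable diagonal choice makes any limit point invariant under every $\varphi_i$.

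The key obstacle — and the place where zero entropy must enter — is controlling how much an automorphism $\varphi$ can move mass around: a priori $\varphi$ could be a block code with a huge radius, shuffling the empirical measure on $[-n,n]$ drastically. The point is that by Hedlund's theorem every automorphism has a finite radius $R$, and if $h(\Sigma)=0$ then the number of distinct words of length $2R+1$ appearing in $\Sigma$ is subexponential; in fact, the crucial quantitative input should be that the growth function $N_\Sigma(\{1,\ldots,r\})$ grows slower than any exponential, so that for a fixed finite set $\Phi$ of automorphisms the ``combined radius'' still only sees a pattern count of the form $e^{o(r)}$. I would use this to show that $\mu_n^x$ and $\varphi_*\mu_n^x$ are close in an appropriate sense after averaging — more precisely, that the orbit-empirical measures cannot be spread across too many ``types,'' so that the automorphism action on them is approximately measure preserving in the large-$n$ limit. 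This is essentially an entropy/counting argument: if $\varphi_*\nu \neq \nu$ for a limit measure, one can extract from this a lower bound on the exponential growth rate of $\Sigma$, contradicting $h(\Sigma)=0$.

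Carrying this out, I expect the steps to be: (1) reduce to showing that for every finite symmetric $\Phi \subseteq \aut(\Z,\Sigma)$ there is a $\Phi$-invariant measure, then take an intersection over an exhausting sequence $\Phi_1 \subseteq \Phi_2 \subseteq \cdots$ (each set of $\Phi_k$-invariant measures is nonempty, compact, and convex, and they are nested, so the intersection is nonempty and consists of characteristic measures); (2) for fixed $\Phi$, let $G_\Phi$ be the group generated by $\Phi$ together with the shift $\sigma$ — this is a finitely generated group acting on $\Sigma$ by homeomorphisms commuting with nothing extra, but the relevant feature is that its action ``decodes'' through bounded-radius block maps; (3) show $G_\Phi$ is amenable, or at least that the $G_\Phi$-action on $\Sigma$ admits an invariant measure, by constructing Følner-type averaging sets inside the $\sigma$-orbit whose images under elements of $\Phi$ overlap them almost completely — this overlap estimate is exactly where subexponential growth is used, since a bounded-radius map on a word of length $r$ is determined by $N_\Sigma(r+O(1)) = e^{o(r)}$ choices. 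The main difficulty is step (3): making the Følner/overlap estimate precise and uniform enough that the weak-* limit is genuinely $\Phi$-invariant, rather than merely ``approximately'' so. I would anticipate that the cleanest route is to prove directly that any weak-* limit $\nu$ of $\frac{1}{|F_n|}\sum_{g\in F_n}\delta_{gx}$ for the right Følner-like sequence $F_n \subseteq G_\Phi$ is both $\sigma$-invariant and $\Phi$-invariant, with the zero-entropy hypothesis ensuring such $F_n$ exist.
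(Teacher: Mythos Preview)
Your approach has a genuine gap at step~(3). Orbit-empirical measures $\mu_n^x$ have $\sigma$-invariant weak-* limits, but applying $\varphi \in \Phi$ gives $\varphi_*\mu_n^x = \frac{1}{2n+1}\sum_k \delta_{\sigma^k \varphi(x)}$, the empirical measure along the orbit of the \emph{different} point $\varphi(x)$; zero entropy does not force this to be close to $\mu_n^x$. (Already the two-point shift $\{0^\infty,1^\infty\}$ with the swap automorphism shows that orbit averages need not be characteristic.) Your fallback---symmetrize over $\Phi$, or prove $G_\Phi$ amenable, or build F\o lner sets for $G_\Phi$---amounts to assuming the Cyr--Kra conjecture that automorphism groups of zero-entropy minimal shifts are amenable, which is open and would already imply the theorem. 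And the phrase ``F\o lner-type averaging sets inside the $\sigma$-orbit whose images under elements of $\Phi$ overlap them almost completely'' does not make sense as stated: $\varphi$ sends an orbit segment of $x$ to an orbit segment of $\varphi(x)$, and these need not intersect at all.

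The paper bypasses amenability entirely by averaging over \emph{patterns} rather than over an orbit. One takes $S_n \subset \Sigma$ containing exactly one representative of each element of $\Sigma_{F_n}$, lets $\nu_n$ be uniform on $S_n$, and passes to a weak-* limit. Any $\varphi$ with memory set $K$ induces a surjection from $\Sigma_{\tilde F_n}$ onto $\Sigma_{F_n}$, where $\tilde F_n$ is the $K$-enlargement of $F_n$; zero entropy yields a subsequence along which $N_\Sigma(\tilde F_n)/N_\Sigma(F_n)\to 1$, so this surjection is nearly a bijection and $\varphi$ nearly preserves $\nu_n$ at the level of $F_n$-cylinders. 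You correctly isolated the ingredients (finite memory sets, subexponential pattern growth), but the missing idea is that the counting argument must act on the uniform measure on pattern types, not on time averages along a single orbit.
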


Our proof techniques critically uses the zero entropy assumption, and thus leaves open the broader question:
\begin{question}
  Does every shift $(\Z,\Sigma)$ admit a characteristic measure?
\end{question}
We more generally do not know of any countable group $G$ and a shift $(G,\Sigma)$ that does not admit characteristic measures.

Recent work~\cite{cyr2015automorphism1,cyr2015automorphism,coven2015automorphisms,cyr2014automorphism, donoso2015automorphism, salo2014toeplitz, salo2014block} shows that ``small shifts'' have ``small automorphism groups.'' For example, minimal shifts with slow stretched exponential growth (that is, shifts with $N_\Sigma(F) =  O(e^{|F|^\beta})$ for  $\beta < 1/2$) have amenable automorphism groups, as shown by Cyr and Kra~\cite{cyr2015automorphism}. They conjecture that every minimal zero entropy shift has an amenable automorphism group. A proof of this conjecture would imply Theorem~\ref{thm:zero-ent} for minimal shifts.

Theorem~\ref{thm:zero-ent} is a consequence of the following, more general result that applies to finitely generated groups, and relates the existence of characteristic measures to the growth of the shift. Given a finitely generated group $G$, we fix a generating set, and denote by $B_r \subset G$ the ball of radius $r$, according to the corresponding word length metric. 
\begin{theorem}
\label{thm:growth}
Let $G$ be a finitely generated group.
Then every shift $(G,\Sigma)$ for which
\begin{align*}
    \liminf_r \frac{1}{r}\log N_\Sigma(B_r) = 0
\end{align*}
admits a characteristic measure.
\end{theorem}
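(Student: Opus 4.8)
The plan is to realize the characteristic measure as a weak-$*$ limit of stationary measures for random walks on an exhausting sequence of finitely generated subgroups, using the growth hypothesis to force each of those stationary measures to be invariant. Since $\aut(G,\Sigma)$ is countable, enumerate it as $\{\varphi_1,\varphi_2,\dots\}$ and set $\Gamma_n=\langle G,\varphi_1,\dots,\varphi_n\rangle$, a subgroup of the group $\aut(G,\Sigma)\cdot G$ obtained by adjoining the shifts inside $\mathrm{Homeo}(\Sigma)$ (this is a group, as $G$ normalizes $\aut(G,\Sigma)$). Each $\Gamma_n$ is finitely generated and contains $G$. If every such $\Gamma_n$ preserves some $\nu_n\in\mathcal P(\Sigma)$, then, since $\mathcal P(\Sigma)$ is weak-$*$ compact and metrizable, a subsequence of $(\nu_n)$ converges to some $\nu$; for each $\varphi\in\aut(G,\Sigma)$ we have $\varphi\in\Gamma_n$ for all large $n$, so $\varphi_*\nu_n=\nu_n$ along the subsequence, and continuity of $\varphi_*$ gives $\varphi_*\nu=\nu$. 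Thus $\nu$ is characteristic (and, as a bonus, $G$-invariant). It therefore suffices to show that every finitely generated $\Gamma\le\aut(G,\Sigma)\cdot G$ containing $G$ preserves a probability measure on $\Sigma$.

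Fix such a $\Gamma$, a finite symmetric generating set $S\ni 1$, and let $\mu$ be uniform on $S$. The map $\nu\mapsto\mu*\nu=\frac{1}{|S|}\sum_{\gamma\in S}\gamma_*\nu$ is affine and weak-$*$ continuous on the compact convex set $\mathcal P(\Sigma)$, so it has a fixed point: a $\mu$-stationary measure $\nu$. I claim $\nu$ is $\Gamma$-invariant. Invariance under $\supp\mu=S$ implies $\Gamma$-invariance, and the Furstenberg (Kullback--Leibler) entropy $h_\mu(\Sigma,\nu)$ is nonnegative and vanishes precisely when $\gamma_*\nu=\nu$ for all $\gamma\in S$; so it is enough to prove $h_\mu(\Sigma,\nu)=0$.

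Let $\mathcal Q$ be the partition of $\Sigma$ by the coordinate at the identity of $G$. Since $\Gamma\supseteq G$, the $\Gamma$-translates of $\mathcal Q$ generate the Borel $\sigma$-algebra, so $\mathcal Q$ is $\Gamma$-generating, and the random-walk entropy formula (Kaimanovich--Vershik) identifies $h_\mu(\Sigma,\nu)$ with $\lim_{n}\frac1n\,\mathbb E_{\mathbf w}\bigl[H_\nu\bigl(\bigvee_{k=0}^{n-1}w_k^{-1}\mathcal Q\bigr)\bigr]$, where $\mathbf w=(w_k)$ is the $\mu$-random walk and the limit exists by subadditivity (hence equals the corresponding $\liminf$). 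Now the Curtis--Hedlund--Lyndon theorem enters: every element of $S$ is a sliding block code of radius at most a fixed $R$, radii add under composition, and shifts can be conjugated past automorphisms without affecting radii, so an element of $\Gamma$ of word length $\le\ell$ computes the coordinate at the identity from a window contained in $B_{(R+1)\ell}$. Consequently every atom of $\bigvee_{k=0}^{n-1}w_k^{-1}\mathcal Q$ is a union of cylinders over $B_{(R+1)n}$, whence $H_\nu\bigl(\bigvee_{k=0}^{n-1}w_k^{-1}\mathcal Q\bigr)\le\log N_\Sigma(B_{(R+1)n})$. Choosing $n=\lfloor r_j/(R+1)\rfloor$ along a sequence $r_j$ with $\frac1{r_j}\log N_\Sigma(B_{r_j})\to 0$ (which exists by hypothesis) shows that this $\liminf$, hence the limit and hence $h_\mu(\Sigma,\nu)$, equals $0$.

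I expect the main obstacle to be the middle link: establishing, in exactly the form used, the identity between the Furstenberg entropy of the stationary measure and the asymptotic normalized entropy of the trajectory partitions $\bigvee_{k<n}w_k^{-1}\mathcal Q$ --- in particular that the limit exists by subadditivity and that a $\Gamma$-generating partition really computes $h_\mu$ rather than the entropy of a proper factor. The rest --- the reduction via countability and weak-$*$ compactness, existence of stationary measures, the sliding-block-code window estimate, and the final application of the growth hypothesis along the good subsequence --- is routine.
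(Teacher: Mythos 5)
Your overall scheme (reduce to finitely generated subgroups $\Gamma\supseteq G$ of the group generated by $G$ and the automorphisms, take a $\mu$-stationary measure for a lazy symmetric walk on $\Gamma$, and argue its Furstenberg entropy vanishes so that stationarity upgrades to invariance, then pass to a weak-$*$ limit) is genuinely different from the paper's argument, and its outer layers are fine: the countability/compactness reduction, the existence of stationary measures, the fact that a stationary measure with zero Furstenberg entropy is invariant under $\supp\mu$, and the sliding-block-code window estimate are all correct. The problem is the middle link, exactly where you flag uncertainty, and it is a genuine gap rather than a routine verification. The identity you invoke --- that for a ``$\Gamma$-generating'' partition $\mathcal Q$ the Furstenberg entropy $h_\mu(\Sigma,\nu)$ equals $\lim_n\frac1n\,\mathbb{E}\bigl[H_\nu\bigl(\bigvee_{k=0}^{n-1}w_k^{-1}\mathcal Q\bigr)\bigr]$ --- is not the Kaimanovich--Vershik formula and is false as stated: take $G=\Gamma=\Z^3$ acting on the full shift with the Bernoulli measure $\nu$ and $\mathcal Q$ the coordinate partition at the identity; $\nu$ is invariant, so $h_\mu(\nu)=0$, yet simple random walk is transient, its range grows linearly, and the trajectory-partition entropy rate is a positive constant. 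What your argument actually needs is only the inequality $h_\mu(\Sigma,\nu)\le\liminf_n\frac1n\,\mathbb{E}\bigl[H_\nu\bigl(\bigvee_{k=0}^{n-1}w_k^{-1}\mathcal Q\bigr)\bigr]$, but this is not a standard fact either: Furstenberg entropy is an average of logarithmic Radon--Nikodym derivatives (a relative-entropy-type quantity), and such quantities are not controlled by partition entropies without an argument. The natural route (interpreting the right-hand side as the fiber entropy of the skew product over the Bernoulli base) runs into the second issue you yourself raise: for the trajectory entropy of $\mathcal Q$ to dominate $h_\mu$ you need $\bigvee_{k\ge0}w_k^{-1}\mathcal Q$ to generate the Borel $\sigma$-algebra along almost every sample path, and ``$\Gamma$-generating'' does not give this --- the walk only visits a sparse set of group elements, so a priori you are computing the entropy of a proper factor, which cannot upper-bound $h_\mu(\Sigma,\nu)$. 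Finally, the claimed subadditivity of $n\mapsto\mathbb{E}\bigl[H_\nu\bigl(\bigvee_{k<n}w_k^{-1}\mathcal Q\bigr)\bigr]$ is not immediate, because $\nu$ is only stationary, so $H_\nu(g^{-1}\mathcal Q)\ne H_\nu(\mathcal Q)$ in general and the usual splitting of the join at time $n$ produces the translated partition $w_n^{-1}\mathcal Q$ rather than $\mathcal Q$.

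For comparison, the paper's proof avoids entropy theory entirely and is elementary counting: for a F{\o}lner-like sequence of windows $F_n$ with $N_\Sigma(\tilde F_n)/N_\Sigma(F_n)\to1$ (which the growth hypothesis supplies along a subsequence of balls), one takes the uniform measure $\nu_n$ on a set $S_n$ of points representing each pattern in $\Sigma_{F_n}$; an automorphism with memory set $K$ induces a surjection $\Sigma_{\tilde F_n}\to\Sigma_{F_n}$, and the near-equality of cardinalities forces $\pi_m(\nu_n)$ and $\pi_m(\varphi_*\nu_n)$ to be $2\eps$-close in total variation for all $m\le n$, so any weak-$*$ limit is exactly characteristic. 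If you want to salvage your approach, the missing ingredient is a proved inequality bounding the Furstenberg entropy of a stationary measure on a subshift by (a subsequential) normalized pattern complexity; as it stands, that is the theorem-shaped hole at the center of the argument.
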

Theorem~\ref{thm:zero-ent} is an immediate specialization of this result to the case $G=\Z$.

\subsection{Beyond symbolic systems}
It is simple to construct a dynamical system $(\Z,C)$, which is not symbolic, and which has no characteristic measures: simply let $\Z$ act trivially on the Cantor set $C$. This system admits no characteristic measures, since the Cantor set has no measure that is invariant to all of its homeomorphisms. 

Recall that a dynamical system $(G,X)$ is said to be {\em topologically transitive} if for every two non-empty open sets $U,W \subset X$ there is some $g \in G$ such that $gU \cap W \neq \emptyset$. The system $(G,X)$ is {\em minimal}
if $X$ has no closed, $G$-invariant sets. It is {\em free} if $g x \neq x$ for every $x\in X$ and every non-trivial $g \in G$; in the important case of $G = \Z$ every non-trivial minimal system is free.
\begin{question}
  Does there exist a non-trivial minimal topological dynamical system that does not admit a characteristic measure?
\end{question}
An example of a topologically transitive $\Z$-system without characteristic measures is the $\Z$ action by shifts on $C^\Z$, where $C$ is the Cantor set. 

Recall that $(G,X)$ is said to be proximal \cite{glasner1976proximal} if for every $x,y\in X$ there exists a net $(g_i)_i$ in $G$ such that $\lim_i g_i x=\lim_i g_i y$. Many constructions of dynamical systems without invariant measures are proximal (e.g., the Furstenberg boundary of non-amenable groups~\cite{furstenberg1963poisson, glasner1976proximal}). Hence the following claim highlights a tension that needs to be overcome in order to construct minimal systems without characteristic measures.
\begin{claim}
Let $(G,X)$ be a free system. Then $(\aut(G,X),X)$ is not proximal.
\end{claim}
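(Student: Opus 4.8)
The plan is to argue by contradiction, exploiting the one structural feature that distinguishes $\aut(G,X)$ from an arbitrary group of homeomorphisms of $X$: by definition every $\varphi \in \aut(G,X)$ commutes with the $G$-action, i.e.\ $\varphi \circ g = g \circ \varphi$ for all $g \in G$, so each $\varphi$ is $G$-equivariant. Since the action of $G$ is jointly continuous, each $g \in G$ is a homeomorphism of $X$, and $G$-equivariant maps push forward convergent nets compatibly with these homeomorphisms. This is what lets me convert proximality in $(\aut(G,X),X)$ into a fixed point for $G$, contradicting freeness.

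Here is the argument. Assume $G$ is non-trivial, and pick a non-trivial $g \in G$. Fix any $x \in X$ and set $y := gx$; by freeness $y \neq x$. Suppose, for contradiction, that $(\aut(G,X),X)$ is proximal. Then there is a net $(\varphi_i)_i$ in $\aut(G,X)$ and a point $z \in X$ with $\varphi_i x \to z$ and $\varphi_i y \to z$. By equivariance, $\varphi_i y = \varphi_i(gx) = g(\varphi_i x)$ for every $i$, and since $g \colon X \to X$ is continuous, $g(\varphi_i x) \to gz$. By uniqueness of limits in the compact Hausdorff space $X$ this forces $gz = z$, contradicting freeness of $(G,X)$. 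Hence $x$ and $y$ form a pair that is not proximal for $\aut(G,X)$, so $(\aut(G,X),X)$ is not proximal.

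I do not expect a genuine obstacle here: once the commutation relation is invoked, the proof is a two-line diagram chase. The only point needing a word of care is the degenerate case — if $G$ is trivial then freeness holds vacuously while $\aut(G,X)$ is the full homeomorphism group, which can well be proximal (e.g.\ on a Cantor set) — so the statement should be read, as elsewhere in the paper, under the standing assumption that $G$ is a non-trivial finitely generated group; under that assumption freeness also forces $|X|>1$, so nothing degenerate remains.
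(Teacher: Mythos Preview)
Your proof is correct and follows essentially the same approach as the paper: assume proximality, take the pair $(x,gx)$, use that elements of $\aut(G,X)$ commute with $g$ and that $g$ acts continuously to deduce $gz=z$ for the common limit $z$, contradicting freeness. Your remark on the trivial-$G$ edge case is a useful clarification that the paper leaves implicit.
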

\begin{proof}
Assume that $(\aut(G,X),X)$ is proximal. Then for each $x \in X$ and $g \in G$, there is a net $(\phi_i)_i$ such that $\lim_i \phi_i x = \lim_i \phi_i g x$. Since $G$ and $\aut(G,X)$ commute, and since the action is continuous, we have that $g \lim_i \phi_i x=\lim_i \phi_i x$. Hence $(G,X)$ is not free.
\end{proof}
\subsection{Soficity of automorphism groups}
We show the following result, using techniques that are similar to those used to prove Theorem~\ref{thm:zero-ent}.
\begin{theorem}
\label{cor:abelian}
Let $(\Z,\Sigma)$ a minimal shift with $h(\Z,\Sigma)=0$. Then $\aut(\Z,\Sigma)$ is sofic.
\end{theorem}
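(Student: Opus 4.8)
The plan is to build explicit sofic approximations of $\Gamma := \aut(\Z,\Sigma)$ out of the finite combinatorial data of $\Sigma$, using the characteristic measure supplied by Theorem~\ref{thm:zero-ent}. One may first assume $\Sigma$ is infinite, since otherwise $\Sigma$ is a single periodic orbit, $\Gamma$ is finite, and finite groups are sofic. As $(\Z,\Sigma)$ is minimal and infinite, every $\Z$-invariant measure on $\Sigma$ has full support and no atoms; let $\mu$ be a characteristic measure, which is in particular $\Z$-invariant since $\Z$ is abelian (hence $\langle T\rangle\le\Gamma$). For each $n$ set $V_n := L_n(\Sigma) = \{x_{[1,n]} : x\in\Sigma\}$ and let $\mu_n$ be the pushforward of $\mu$ to the coordinates $[1,n]$, so $\max_w \mu_n(w)\to 0$. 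Since $h(\Sigma)=0$ we have $|V_n| = e^{o(n)}$; moreover this lets us pass to a subsequence of $n$'s along which the fibers are balanced, e.g. $N_\Sigma([1,n+c_n]) \le e^{o(n)} N_\Sigma([1,n])$ for a prescribed slowly growing $c_n$, so that truncating or extending the window is an $\mu$-almost-identity.

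Given a finite $F\subseteq\Gamma$ and $\eps>0$, fix such an $n$, larger than all relevant block-code radii. For $\varphi\in\Gamma$ define $\sigma_n(\varphi)\in\mathrm{Sym}(V_n)$ to be a permutation best modelling the action of $\varphi$ on the measure algebra of $(\Sigma,\mu)$ localized to the window $[1,n]$: heuristically, $\sigma_n(\varphi)(w)$ should be $\varphi(x)_{[1,n]}$ for $\mu$-typical $x$ with $x_{[1,n]}=w$. This is nearly well defined because $\varphi$ is a sliding block code: once one is $O(\mathrm{radius}(\varphi))$ inside the window, $\varphi(x)$ is determined by $w$, so conditioning on $w$ determines $\varphi(x)_{[1,n]}$ up to a boundary layer of relative size $O(\mathrm{radius}(\varphi)/n)$; the balanced-fiber choice of $n$ is used to promote this approximate map to an honest bijection of $V_n$ distorting $\mu_n$ by only $o(1)$.

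Approximate freeness is then easy. For $\varphi\in F$ with $\varphi\ne e$, the set $\mathrm{Fix}(\varphi)$ is closed and $\Z$-invariant (as $\varphi$ commutes with the shift), hence empty by minimality; by compactness there is a finite window $K$ such that every $x\in\Sigma$ has $x$ and $\varphi(x)$ differing somewhere in $x|_K$, and by shift-invariance they differ inside every translate of $K$. So for $n>|K|$ no $w\in V_n$ admits $x$ with $x_{[1,n]}=w=\varphi(x)_{[1,n]}$, and hence $\sigma_n(\varphi)$ fixes at most an $o(1)$-fraction (in $\mu_n$) of $V_n$, the slack coming only from the boundary correction.

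The heart of the matter is the approximate multiplicativity $\mu_n(\{w : \sigma_n(\varphi)\sigma_n(\psi)(w) = \sigma_n(\varphi\psi)(w)\}) \ge 1-\eps$ for $\varphi,\psi\in F$. Away from the ends of the window, the block codes of $\varphi$ and $\psi$ compose exactly to that of $\varphi\psi$, so the two sides agree on all but an $O((\mathrm{radius}(\varphi)+\mathrm{radius}(\psi))/n)$-fraction of coordinates; the delicate points are to upgrade ``agree on most coordinates'' to ``equal in $V_n$ for $\mu_n$-most $w$'' and to control the compounding of boundary errors through $\sigma_n(\varphi)\circ\sigma_n(\psi)$. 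Here the \emph{characteristic} property is essential: $\psi_*\mu=\mu$, so the $\mu$-typical fiber over $w$ is carried by $\psi$ into the $\mu$-typical fiber over $\sigma_n(\psi)(w)$, which keeps the error small when $\varphi$ is applied next; together with the balanced-fiber bound on $|V_n|$, a union bound over the $O(1)$ steps closes the estimate. This reconciliation of the purely local (sliding-block-code) description of each automorphism with a single, globally consistent finite model, powered by the characteristic measure and the zero-entropy combinatorics, is the main obstacle; the remainder is bookkeeping. Assembling these three estimates shows that $(\sigma_n)$ is a sofic approximation of $\aut(\Z,\Sigma)$, which is therefore sofic.
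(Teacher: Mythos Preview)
Your outline has the right skeleton---finite models from $n$-blocks, the zero-entropy growth condition giving near-unique extensions, minimality plus compactness for freeness---but the characteristic measure is a red herring, and leaning on it introduces a real gap. Soficity is a counting-measure statement: you need the bad sets (where multiplicativity or freeness fail) to be small as a fraction of $|V_n|$, not small in $\mu_n$-measure. Since $\mu_n$ can be far from uniform on $V_n$, your $\mu_n$-estimates do not yield a sofic approximation as written. Your claim that ``the characteristic property is essential'' is simply false: the paper's proof (Theorem~\ref{thm:Sofic}) never touches $\mu$.

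The paper instead works purely combinatorially with partially defined maps on $A=\Sigma_{\tilde F}$, where $\tilde F=\bigcup_{g\in K}gF$ and $K$ is a common memory set for the finitely many automorphisms under consideration. The map $\tilde\varphi$ sends $\sigma_{\tilde F}$ to the unique element of $\Sigma_{\tilde F}$ projecting to $\varphi(\sigma)_F$, whenever uniqueness holds; the growth hypothesis $N_\Sigma(\tilde F)/N_\Sigma(F)<1+\eps$ shows directly, in counting measure, that this happens on a $(1-\eps)$-fraction of $A$. The key trick is that $\tilde\varphi$ lands back in $\Sigma_{\tilde F}$ (not merely $\Sigma_F$), so $\tilde\varphi(\sigma_{\tilde F})=\varphi(\sigma)_{\tilde F}$ whenever defined, and hence $\tilde\varphi\circ\tilde\psi=\widetilde{\varphi\psi}$ \emph{exactly} on the common domain---your delicate ``agree on most coordinates $\Rightarrow$ equal as words'' step is bypassed entirely. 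If you drop $\mu$ and argue in counting measure, your sketch can be repaired, but at that point it becomes the paper's argument with extra bookkeeping.
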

Soficity, as defined by Gromov \cite{gromov1999endomorphisms} (see also Weiss~\cite{weiss2000sofic}) is a joint weakening of amenability and residual finiteness, and so this result, in a weak sense, supports the aforementioned conjecture that these automorphism groups are amenable. 

\bigskip
\subsection*{Acknowledgments}
We would like to thank Lewis Bowen, Byrna Kra and Anthony Quas for helpful comments and suggestions.

\section{Proofs}
Let $G$ be a countable group, $A$ a finite alphabet and $(G,\Sigma)$ a subshift of $(G,A^G)$. Let $F$ be a finite subset of $G$. The restriction of $\sigma \in \Sigma$ to $F$ is denoted by $\sigma_F \colon F \to A$. We denote
\begin{align*}
    \Sigma_F = \{\sigma_F\,:\, \sigma \in \Sigma\},
\end{align*}
and denote the growth function of $\Sigma$ by
\begin{align*}
    N_\Sigma(F) = |\Sigma_F|.
\end{align*}


\begin{proposition}
\label{prop:growth}
 Let $G$ be a countable group, and let  $(F_n)_n$ be an increasing sequence of finite subsets of $G$ with $\cup_n F_n=G$. Let $(G,\Sigma)$ be a shift with the property that for every finite $K \subset G$ it holds that
 \begin{align*}
     \liminf_n \frac{N_\Sigma\left(\cup_{g \in K}g F_n\right)}{N_\Sigma(F_n)} = 1.
 \end{align*}
 Then $(G,\Sigma)$ admits a characteristic measure.
\end{proposition}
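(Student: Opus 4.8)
The plan is to build a characteristic measure as a weak-$*$ limit of uniform measures on fibers $\Sigma_{F_n}$, pushed forward to $\Sigma$, and to use the hypothesis on the growth ratios to control how the automorphism group distorts these measures. Concretely, for each $n$ pick, for every pattern $p \in \Sigma_{F_n}$, a point $x_p \in \Sigma$ extending $p$, and let $\mu_n = \frac{1}{N_\Sigma(F_n)}\sum_{p \in \Sigma_{F_n}} \delta_{x_p}$ be the corresponding uniform average. Passing to a subsequence, we may assume $\mu_n \to \nu$ weakly-$*$ and that the $\liminf$ in the hypothesis is attained along this subsequence for a suitable exhausting family of finite sets $K$. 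The candidate characteristic measure is $\nu$.

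The key step is to show $\varphi_*\nu = \nu$ for every $\varphi \in \aut(G,\Sigma)$. By Curtis–Hedlund–Lyndon, $\varphi$ and $\varphi^{-1}$ are given by block codes with some finite memory window $K = K(\varphi)$: the value $(\varphi x)(g)$ depends only on $x$ restricted to $gK$. Hence $\varphi$ induces a map $\Sigma_{\cup_{g}gK \cdot F_n} \to \Sigma_{F_n}$ — more precisely, the restriction $(\varphi x)_{F_n}$ is determined by $x_{KF_n}$ (writing $KF_n = \cup_{g\in K} gF_n$). This map from $\Sigma_{KF_n}$ to $\Sigma_{F_n}$ need not be injective, but it is \emph{at most $N_\Sigma(KF_n)/N_\Sigma(F_n)$-to-one on average}; I would argue that the pushforward under $\varphi$ of the uniform measure on $\Sigma_{F_n}$ has total-variation distance from the uniform measure on $\Sigma_{F_n}$ bounded by a function of the ratio $N_\Sigma(KF_n)/N_\Sigma(F_n) - 1$, using that $\varphi$ is a bijection of $\Sigma$ and that the two window maps (for $\varphi$ and $\varphi^{-1}$) compose to the identity. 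Since the hypothesis forces this ratio to tend to $1$ along our subsequence, $\|\varphi_*\mu_n - \mu_n\|$ (in an appropriate sense testing against continuous functions depending on finitely many coordinates) goes to $0$, and passing to the limit gives $\varphi_*\nu = \nu$. Finally one checks that a measure invariant under every $\varphi$ in the full automorphism group is exactly a characteristic measure, which is the definition.

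I expect the main obstacle to be the second step: making precise and rigorous the claim that "$\varphi$ nearly preserves the uniform measure on patterns because it is a near-bijection between pattern sets of nearly equal size." The subtlety is that $\varphi_*\mu_n$ is not literally supported on the fibers we averaged over, and the block code is a map between restrictions over \emph{different} finite windows ($KF_n$ versus $F_n$), so one must carefully track which σ-algebra / which finite set of coordinates the comparison is taking place on, and argue that any continuous test function $f$ depending only on coordinates in a fixed finite set $E$ sees $|\int f\,d\varphi_*\mu_n - \int f\,d\mu_n| \to 0$. The counting input is: the number of patterns in $\Sigma_{F_n}$ whose $\varphi$-image-pattern is "shared" with another pattern is at most $N_\Sigma(KF_n) - N_\Sigma(F_n) = o(N_\Sigma(F_n))$, because collisions in $\Sigma_{F_n}$ under the window map can only come from the extra freedom in the larger window $KF_n$; combined with the symmetric statement for $\varphi^{-1}$ this pins down the total variation bound. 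Once that combinatorial lemma is stated cleanly, the weak-$*$ limit argument is routine.
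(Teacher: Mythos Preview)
Your proposal is correct and matches the paper's approach: uniform measures on representatives of $\Sigma_{F_n}$, a weak-$*$ limit, and the memory-set (Curtis--Hedlund--Lyndon) description of each automorphism to bound the total-variation distance between the $F_n$-projections of $\mu_n$ and $\varphi_*\mu_n$ by $O\bigl(N_\Sigma(KF_n)/N_\Sigma(F_n)-1\bigr)$. The paper's counting step is precisely your ``collisions come from the extra freedom in $KF_n$'' observation, phrased via surjectivity of the induced map $\varphi'\colon\Sigma_{KF_n}\to\Sigma_{F_n}$; in particular you do not need the block code for $\varphi^{-1}$ separately, and your care in first diagonalizing so that the $\liminf$ is attained along the chosen subsequence for an exhausting family of $K$'s is exactly right.
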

If $G$ is in addition amenable then $(G,\Sigma)$ admits a characteristic invariant measure. To see this, note that the set of characteristic measures is a compact, convex subset of the Borel measures on $\Sigma$. The group $G$ acts on this set, since for any characteristic $\nu$, $g \in G$ and $\varphi \in \aut(G,\Sigma)$ it holds that  $\varphi (g \nu) = g \varphi(\nu) = g \nu$. Since $G$ is amenable this action must have a fixed point, which is the desired characteristic invariant measure.

The proof of Proposition~\ref{prop:growth} will use the notion of a {\em memory set}. Given $\varphi \in \aut(G,\Sigma)$, there is some finite $K \subset G$ and a map $\Phi \colon A^K \to A$ such that
\begin{align*}
    [\varphi(\sigma)](g) = \Phi\left((g^{-1} \sigma)_{K}\right).
\end{align*}
The set $K$ is called a {\em memory set} of $\varphi$; see, e.g., \cite[p.\ 6]{ceccherini2010cellular}. We can assume  without loss of generality that $K$ contains the identity.

\begin{proof}[Proof of Proposition~\ref{prop:growth}]
For each $n$, let $\pi_n \colon \Sigma \to A^{F_n}$ be the restriction map $\sigma \mapsto \sigma_{F_n}$, so that $\pi_n(\Sigma) = \Sigma_{F_n}$. Let $S_n \subset \Sigma$ be a set of representatives of the set $\{\pi_n^{-1}(\sigma_{F_n})\,:\, \sigma \in \Sigma\}$ of preimages of $\pi_n$. Hence $\pi_n(S_n) = \Sigma_{F_n}$ and $|S_n| = |\Sigma_{F_n}| = N_\Sigma(F_n)$. 

Let $\nu_n$ be the uniform measure over $S_n$, and let $\nu$ be any weak limit of a subsequence of $(\nu_n)_n$; such a limit exists by compactness. We will show that $\nu$ is characteristic. 

Fix $\varphi \in \aut(G,\Sigma)$. Let $K \subset G$ be a memory set of $\varphi$, and assume it contains the identity. There is thus $\Phi \colon A^K \to A$ such that $[\varphi(\sigma)](g) = \Phi\left((g^{-1} \sigma)_{K}\right)$. Denote
\begin{align*}
    \tilde F_n = \bigcup_{g \in K} F_n g.
\end{align*}
Let $\tilde S_n = \{\sigma_{\tilde F_n}\,:\, \sigma \in S_n\}$ be the set of projections of the elements of $S_n$ to $\tilde F_n$. Since $\tilde F_n$ contains $F_n$ it follows that $|S_n| = |\tilde S_n|$.

Define $\varphi' \colon \Sigma_{\tilde F_n} \to \Sigma_{F_n}$ by
\begin{align*}
    [\varphi'(\sigma)](g) = \Phi\left((g^{-1} \sigma)_{K}\right),
\end{align*}
for $g \in F_n$.

By the definition of $\tilde F_n$ this is well defined, and moreover $\varphi(\sigma)_{F_n} = \varphi'(\sigma_{\tilde F_n})$; that is, $\varphi'$ maps the restriction of $\sigma$ to $\tilde F_n$ to the restriction of $\varphi(\sigma)$ to $F_n$. Hence $\varphi(S_n)_{F_n} = \varphi'(\tilde S_n)$. Also, $\varphi'$ is onto and so there is a subset $R_n \subseteq \Sigma_{\tilde F_n}$  such that the restriction of $\varphi'$ to $R_n$ is a bijection from $R_n$ to $\Sigma_{F_n}$.

For every $\varepsilon > 0$, we can, by the claim hypothesis, take $n$ to be large enough so that $N_\Sigma(F_n) \geq (1-\varepsilon)N_\Sigma(\tilde F_n)$.
Then $R_n$ and $\tilde S_n$ are both of size $N_\Sigma(F_n) \geq (1-\varepsilon)N_\Sigma(\tilde F_n)$. Since their union is contained in $\Sigma_{\tilde F_n}$ and is thus of size at most $N_\Sigma(\tilde F_n)$, their intersection is of size at least  $(1-2\varepsilon)N_\Sigma(\tilde F_n)$. Since
$$
\varphi(S_n)_{F_n} = \varphi'(\tilde S_n) \supseteq \varphi'(\tilde S_n \cap R_n),
$$ 
and since $\varphi'$ is a bijection when restricted to $R_n$,  $\varphi(S_n)$ is also of size at least $(1-2\varepsilon)N_\Sigma(\tilde F_n)$, which is at least $(1-2\varepsilon)N_\Sigma(F_n)$. 

Since $\nu_n$ is the uniform distribution on $S_n$, it follows that the push-forward measures $\pi_n(\nu_n)$ and $\pi_n(\varphi(\nu_n))$ differ by at most $2\varepsilon$ in total variation.  Since the sequence $(F_n)_n$ is increasing, this implies that for all $m \leq n$ it also holds that $\pi_m(\nu_n)$ and $\pi_m(\varphi(\nu_n))$  differ by at most $2\varepsilon$. Thus for each $m$, $\pi_m(\nu)$ and $\pi_m(\varphi(\nu))$ are identical, and so $\varphi(\nu) = \nu$, since $\cup_n F_n = G$, and so the cylinder sets defined by the restrictions $(\pi_m)_m$ form a clopen basis for the Borel $\sigma$-algebra. We have thus shown that $\nu$ is characteristic.

\end{proof}

Using Proposition~\ref{prop:growth}, the proof of our main result is straightforward.
\begin{proof}[Proof of Theorem~\ref{thm:growth}]
Denote $L(r) = \log N_\Sigma(B_r)$. By the claim hypothesis, there is a sequence $(r_k)_k$ such that $\lim_k L(r_k)/r_k = 0$. Thus, and because $L(r)$ is increasing, there is another subsequence $r_n$ such that for every $i > 0$ 
\begin{align*}
    \lim_\ell L(r_n+i) - L(r_n) = 0.
\end{align*}
Hence if we set $F_n = B_{r_n}$ then the conditions of Proposition~\ref{prop:growth} are satisfied, and thus the conclusion follows.
\end{proof}

Theorem~\ref{cor:abelian} is a corollary of the following more general statement.
\begin{theorem}
\label{thm:Sofic}
 Let $G$ be a countable group, and let  $(F_n)_n$ be an increasing sequence of finite subsets of $G$ with $\cup_n F_n=G$. Let $(G,\Sigma)$ be a minimal shift with the property that for every finite $K \subset G$ it holds that
 \begin{align*}
     \liminf_n \frac{N_\Sigma\left(\cup_{g \in K}g F_n\right)}{N_\Sigma(F_n)} = 1.
 \end{align*}
 Then $\aut(G,\Sigma)$ is sofic.
\end{theorem}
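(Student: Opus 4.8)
The plan is to reuse the combinatorial heart of the proof of Proposition~\ref{prop:growth} — the observation that an automorphism $\varphi$, viewed locally on a large window $F_n$, is "almost a bijection" from $\Sigma_{\tilde F_n}$ onto $\Sigma_{F_n}$ — and to feed this into the definition of soficity via almost-homomorphisms into symmetric groups. Recall that a countable group $H$ is sofic if for every finite $S \subseteq H$ and every $\eps > 0$ there is a finite set $V$ and a map $\psi \colon H \to \mathrm{Sym}(V)$ such that (i) for all $s,t \in S$, $\psi(s)\psi(t)$ and $\psi(st)$ agree on at least $(1-\eps)|V|$ points, and (ii) for all $s \neq t$ in $S$, $\psi(s)$ and $\psi(t)$ disagree on at least $(1-\eps)|V|$ points (equivalently, $\psi(s)$ has at most $\eps|V|$ fixed points for $s \neq e$). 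Here $H = \aut(G,\Sigma)$, which is countable by Hedlund's theorem, so we may enumerate it and it suffices to handle each finite symmetric $S \ni e$.

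First I would fix a finite symmetric $S \subseteq \aut(G,\Sigma)$ and $\eps > 0$, let $K$ be a common memory set (containing $e$) for all $\varphi \in S$ and their inverses, and — using the liminf hypothesis applied to the finite set $K \cup K^{-1} \cup (KK)$ or a suitable enlargement — choose $n$ so large that $N_\Sigma(\tilde F_n) \le (1+\delta) N_\Sigma(F_n)$ for a $\delta \ll \eps$, where $\tilde F_n = \bigcup_{g \in K} F_n g$ (and more generally the window needs to absorb all the memory sets involved in forming products, so I would take $\hat F_n = \bigcup_{g \in (K\cup K^{-1})^2} F_n g$ or similar). Take $V = V_n := \Sigma_{F_n}$, so $|V| = N_\Sigma(F_n)$. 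As in the proof of Proposition~\ref{prop:growth}, each $\varphi \in S$ induces $\varphi' \colon \Sigma_{\tilde F_n} \to \Sigma_{F_n}$ which is onto, and restricting to a section $R_n$ gives a bijection $R_n \to \Sigma_{F_n}$; composing with the (almost bijective) restriction $\Sigma_{\tilde F_n} \to \Sigma_{F_n}$ lets one build a genuine permutation $\psi_n(\varphi) \in \mathrm{Sym}(V_n)$ that agrees with the "true" partial map $\sigma_{F_n} \mapsto \varphi(\sigma)_{F_n}$ on at least $(1 - C\delta)|V_n|$ points. Property (i), the almost-multiplicativity, then follows because on the large common domain where everything is the honest local rule, composition of the local rules of $\varphi$ and $\psi$ equals the local rule of $\varphi \psi$ (this is exactly why $K$ must be big enough to contain the memory set of the composition); a union bound over the few "bad" points of each factor keeps the error $O(\delta)$, which we arranged to be $\le \eps$.

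The step I expect to be the main obstacle is property (ii): showing that for $\varphi \neq \psi$ in $S$, the permutations $\psi_n(\varphi)$ and $\psi_n(\psi)$ differ almost everywhere — equivalently, taking $\varphi \neq e$, that $\psi_n(\varphi)$ has $o(|V_n|)$ fixed points. This is precisely where minimality of $\Sigma$ (unused in Proposition~\ref{prop:growth}) must enter. If $\varphi \neq e$ then there is some $\sigma^\star \in \Sigma$ and $g_0 \in G$ with $[\varphi(\sigma^\star)](g_0) \neq \sigma^\star(g_0)$; by continuity this persists on a cylinder, i.e. there is a finite pattern $p$ on a finite set $D \ni g_0$ such that every $\sigma \in \Sigma$ whose restriction to $D$ is $p$ satisfies $\varphi(\sigma)_{F} \neq \sigma_F$ on the relevant coordinates for any window containing $D$. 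Minimality (via a standard compactness argument) gives a syndetic set of occurrences: there is a finite $T \subseteq G$ such that for every $\sigma \in \Sigma$ and every $g \in G$, the pattern $p$ occurs in $\sigma$ somewhere inside $gT$. I would then want to conclude that a positive-density — in fact $(1-o(1))$ — fraction of $w \in \Sigma_{F_n}$ admit an extension $\sigma \in \Sigma$ with $\sigma_{F_n} = w$ and $\varphi(\sigma)_{F_n} \neq w$, so that the honest partial map (hence $\psi_n(\varphi)$, off its bad set) moves $w$. The delicate point is passing from "the pattern occurs syndetically inside each $\sigma$" to "the induced permutation on $\Sigma_{F_n}$ has few fixed points," since distinct $\sigma$ can have the same restriction to $F_n$; one route is to note that if $w = \sigma_{F_n}$ is a fixed point of the honest local map, then for the section element $\sigma \in S_n$ representing $w$ we have $\varphi(\sigma)_{F_n} = \sigma_{F_n}$, and then use the syndetic occurrence of $p$ together with the local rule to derive a contradiction for all but a boundary-like set of $w$ — quantifying this "boundary" set and showing it is $o(N_\Sigma(F_n))$, using again the growth hypothesis $N_\Sigma(\tilde F_n)/N_\Sigma(F_n) \to 1$ to control the cost of extending patterns near $\partial F_n$, is the crux of the argument and the part I would allocate the most care to.
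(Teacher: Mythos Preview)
Your overall strategy is the paper's: realize each automorphism as an almost-bijection of a finite pattern space using its memory-set local rule, with the growth hypothesis controlling the defect, and feed this into a sofic-approximation criterion. The almost-multiplicativity part is handled the same way in both. (A minor technical difference: the paper works with partially defined maps on $A=\Sigma_{\tilde F}$ via a lemma that packages the sofic criterion, rather than completing to honest permutations of $\Sigma_{F_n}$; this saves some bookkeeping but is not essential.)

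Where you diverge is the freeness step (ii), and there you are making life much harder than necessary. You locate a single distinguishing pattern $p$, invoke syndetic recurrence from minimality, and then worry about quantifying a ``boundary-like'' bad set of size $o(N_\Sigma(F_n))$. The paper bypasses all of this with two observations. First, for any nontrivial $\varphi\in\aut(G,\Sigma)$ the set $\{\sigma:\varphi(\sigma)=\sigma\}$ is closed and $G$-invariant (invariance because $\varphi$ commutes with $G$), so by minimality it is empty: $\varphi$ acts \emph{freely} on $\Sigma$. Second, a direct compactness argument (the paper's Lemma~\ref{lem:compact}) then yields a finite $K$ with $\sigma_K\neq\varphi(\sigma)_K$ for \emph{every} $\sigma\in\Sigma$. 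Enlarging the common memory set to contain this $K$ for each $\varphi$ in your finite $S$, the honest local map has \emph{zero} fixed points once $K\subseteq F_n$ --- not merely $o(|V|)$ many --- and (ii) holds with no boundary analysis at all. Your syndeticity route can in fact be completed to the same conclusion (once $F_n$ swallows a full syndeticity constant plus buffer, every $\sigma$ is visibly moved inside $F_n$), but the density-of-bad-points detour you flag as the ``crux'' is simply not needed.
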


The following lemma will serve as our working definition of a sofic group; the reduction to the usual definition is straighforward (see, e.g., \cite[Lemma 2.1]{juschenko}). A {\em partially defined map} from a set $A$ to $A$ is a map from a subset of $A$ into $A$.
\begin{lemma}
\label{lem:Kate}
Let $H$ be a countable group. Suppose that for all finite subsets $\Phi\subset H$ and all $\eps>0$ we have a finite set $A$ and a map $g \mapsto \tilde g$ that assigns to each $g \in \Phi$ a partially defined map $\tilde g$ from $A$ to $A$  which satisfies the following four conditions:
\begin{enumerate}
    \item for every $g\in \Phi$ there is a subset $A_g \subset A$ with $|A \setminus A_g|/|A|<\eps$, such that the map $\tilde g$ is defined and injective on $A_g$.
    \item
    For the identity element $e \in G$, $\tilde e$ is the identity map wherever it is defined.
    \item
    $\widetilde{gh}(a)=\tilde g(\tilde h(a))$ whenever all three are defined.
    \item
    If there is some $a \in A$ such that $\tilde g(a)=(a)$, then $g$ is the identity.
\end{enumerate}
Then $H$ is sofic.
\end{lemma}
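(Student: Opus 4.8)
The goal is to verify that the four conditions in the hypothesis are precisely enough to build a sofic approximation in the standard sense, i.e.\ a sequence of finite sets $A_n$ and maps $\psi_n \colon H \to \mathrm{Sym}(A_n)$ into the symmetric groups such that for each fixed $g,h$ one has $d_{\mathrm{Hamm}}(\psi_n(gh),\psi_n(g)\psi_n(h)) \to 0$ and, for $g \neq e$, $d_{\mathrm{Hamm}}(\psi_n(g),\mathrm{id}) \to 1$. Here $d_{\mathrm{Hamm}}$ is the normalized Hamming distance on permutations of the finite set. The only gap between what we are given and this target is that the maps $\tilde g$ are partially defined and merely injective on a large subset, rather than genuine permutations. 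So the first, and essentially the only nontrivial, task is to repair each $\tilde g$ into an honest bijection of $A$ at a cost of $O(\eps)$ in the Hamming metric, in a way that preserves conditions (2)--(4) up to this error.

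\textbf{Step 1: exhaust $H$ and fix tolerances.}
Enumerate $H = \{h_1,h_2,\dots\}$ and, for each $n$, apply the hypothesis with the finite set $\Phi_n = \{h_1,\dots,h_n\}$ (closed up, if convenient, under products and inverses of its elements so that condition (3) has content) and tolerance $\eps_n = 1/n$. This produces a finite set $A^{(n)}$ and a partial assignment $g \mapsto \tilde g^{(n)}$ satisfying (1)--(4). I will suppress the superscript $n$ below and work with a single scale.

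\textbf{Step 2: complete each partial injection to a permutation.}
Fix $g \in \Phi$. By condition (1) there is $A_g \subseteq A$ with $|A \setminus A_g|/|A| < \eps$ on which $\tilde g$ is defined and injective; let $B_g = \tilde g(A_g)$, so $|A \setminus B_g|/|A| < \eps$ as well, since $\tilde g$ is injective on $A_g$ and hence $|B_g| = |A_g|$. The two complements $A \setminus A_g$ and $A \setminus B_g$ have equal cardinality, so I extend $\tilde g|_{A_g}$ to a bijection $\psi(g) \colon A \to A$ by choosing any bijection between these two complements; this agrees with $\tilde g$ on all of $A_g$, hence on a $(1-\eps)$-fraction of $A$. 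I set $\psi(e) = \mathrm{id}$ outright, which is consistent with condition (2).

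\textbf{Step 3: check the two sofic estimates.}
For the multiplicativity estimate, fix $g,h$ with $gh \in \Phi$. On the set $A_h \cap \tilde h^{-1}(A_g) \cap A_{gh}$ — which, being an intersection of three sets each of relative size $> 1-\eps$, has relative size $> 1-3\eps$ — all of $\tilde g,\tilde h, \widetilde{gh}$ are defined and the completions agree with them, so condition (3) gives $\psi(gh)(a) = \psi(g)(\psi(h)(a))$ there. Hence $\psi(gh)$ and $\psi(g)\psi(h)$ agree off a set of relative size $< 3\eps$, giving $d_{\mathrm{Hamm}}(\psi(gh),\psi(g)\psi(h)) < 3\eps$. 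For the faithfulness estimate, fix $g \neq e$; on $A_g$ the completion equals $\tilde g$, and condition (4) forces $\tilde g(a) \neq a$ for every $a$ where $\tilde g$ is defined, so $\psi(g)(a) \neq a$ on all of $A_g$, whence $d_{\mathrm{Hamm}}(\psi(g),\mathrm{id}) \geq 1 - \eps$. Taking $\eps = \eps_n = 1/n$ and letting $n \to \infty$ drives both errors to their required limits, which is exactly soficity.

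\textbf{The main obstacle.}
There is no deep obstacle; the content of the lemma is that the four bookkeeping conditions are individually the ``right'' finite shadows of the group axioms plus freeness. The one point that requires care is Step 2, namely that completing a partial injection to a permutation is possible and cheap: this rests on the equality $|A \setminus A_g| = |A \setminus B_g|$, which in turn uses that $\tilde g$ is genuinely injective (not merely a partial map) on $A_g$. The only other thing to watch is that the three-fold intersection in Step 3 stays large, which is why the domains $A_g$ must all have relative measure close to $1$ simultaneously — guaranteed by applying condition (1) to the finite set $\Phi$ for a single common $\eps$. For the reduction from this Hamming-approximation formulation to whichever formal definition of soficity one adopts, I would cite \cite[Lemma 2.1]{juschenko}, as the excerpt already indicates.
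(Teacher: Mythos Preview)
Your argument is correct and is precisely the standard reduction the paper alludes to. In fact the paper does not prove this lemma at all: it simply states that ``the reduction to the usual definition is straightforward'' and points to \cite[Lemma~2.1]{juschenko}, the same reference you invoke. Your Steps~2 and~3 are exactly that straightforward reduction, carried out in detail; there is nothing to compare. One cosmetic remark: in Step~1 you say you close $\Phi_n$ under products and inverses, but literal closure could make $\Phi_n$ infinite; what you actually need (and what your Step~3 uses) is only that for each fixed $g,h\in H$ the elements $g,h,gh$ all lie in $\Phi_n$ for $n$ large, which the exhaustion $\Phi_n=\{h_1,\dots,h_n\}$ already guarantees.
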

We will need the following compactness lemma. 
\begin{lemma}
\label{lem:compact}
Let $\varphi$ be  an automorphism of a subshift $(G,\Sigma)$ such that $\varphi(\sigma) \neq \sigma$ for all $\sigma \in \Sigma$. Then there is some finite set $K \subset G$ such that for all $\sigma \in \Sigma$ the restrictions $\sigma_{K}$ and $\varphi(\sigma)_{K}$ differ.
\end{lemma}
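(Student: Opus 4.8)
The plan is to prove Lemma~\ref{lem:compact} by a straightforward compactness argument on the product space. Consider the map $\sigma \mapsto (\sigma, \varphi(\sigma))$ and, for each finite $K \subset G$, the set
\begin{align*}
    C_K = \{\sigma \in \Sigma \,:\, \sigma_K = \varphi(\sigma)_K\}.
\end{align*}
Each $C_K$ is closed: the restriction maps $\sigma \mapsto \sigma_K$ and $\sigma \mapsto \varphi(\sigma)_K$ are continuous (the latter because $\varphi$ is a homeomorphism and restriction is continuous), and $A^K$ is discrete, so the locus where two such continuous maps into a Hausdorff space agree is closed. Moreover the $C_K$ are nested: if $K \subset K'$ then $C_{K'} \subset C_K$. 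The hypothesis $\varphi(\sigma) \neq \sigma$ for all $\sigma$ means precisely that $\bigcap_K C_K = \emptyset$, where the intersection is over all finite $K \subset G$ (indeed, $\sigma \in \bigcap_K C_K$ would force $\sigma_K = \varphi(\sigma)_K$ for every finite $K$, hence $\sigma = \varphi(\sigma)$).

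Now invoke compactness of $\Sigma$: a nested family of closed subsets of a compact space with empty intersection must have a finite subfamily with empty intersection. Since the family is directed under reverse inclusion (given finitely many $K_1, \dots, K_m$, the set $K = K_1 \cup \dots \cup K_m$ satisfies $C_K \subset \bigcap_i C_{K_i}$), a single $C_K$ must already be empty. That $K$ is the desired finite set: $C_K = \emptyset$ says exactly that $\sigma_K \neq \varphi(\sigma)_K$ for every $\sigma \in \Sigma$.

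I do not anticipate a genuine obstacle here; the only point requiring a moment's care is the verification that $C_K$ is closed, which rests on continuity of $\varphi$ and of the restriction maps together with the discreteness of the finite alphabet $A$ (so that $A^K$ is Hausdorff with clopen points, making the agreement locus closed). One could alternatively phrase the whole argument via the clopen cylinder basis: $\Sigma \setminus C_K$ is open, the $\Sigma \setminus C_K$ form an open cover of $\Sigma$ as $K$ ranges over finite subsets of $G$, and compactness yields a finite subcover, which by directedness reduces to a single $\Sigma \setminus C_K$ covering all of $\Sigma$. Either formulation is routine.
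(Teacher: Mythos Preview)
Your proof is correct and follows essentially the same compactness argument as the paper. The paper phrases it via sequential compactness (assuming no such $K$ exists, extracting a convergent subsequence of witnesses $\sigma^n$, and obtaining a fixed point in the limit), while you use the finite intersection property directly; these are equivalent formulations of the same idea.
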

\begin{proof}
Let $(F_n)_n$ be an increasing sequence of finite subsets of $G$ with $\cup_n F_n=G$. Assume towards a contradiction that for each $n$ there is a $\sigma^n \in \Sigma$ such that $\sigma^n_{F_n} = \varphi(\sigma^n)_{F_n}$. Assume without loss of generality that the sequence $(\sigma^n)_n$ converges to $\sigma$. Since the sequence $(F_n)_n$ is increasing, $\varphi(\sigma)_{F_n} = \sigma_{F_n}$ for all $n$. Hence $\varphi(\sigma)=\sigma$, since $(F_n)_n$ exhausts $G$. This is in contradiction to our assumption that $\varphi$ has no fixed points.
\end{proof}
\begin{proof}[Proof of Theorem~\ref{thm:Sofic}]
Let $\Phi$ be a finite subset of $\aut(G,\Sigma)$ which includes the identity. Fix $1>\eps>0$. Let $K$ be a finite subset of $G$ that contains the memory sets\footnote{See the proof of Proposition~\ref{prop:growth} for the definition of a memory set.} of all $\varphi \in \Phi$.

Since $(S,\Sigma)$ is minimal, $\varphi(\sigma) \neq \sigma$ for every $\sigma \in \Sigma$ and non-trivial $\varphi$. To see this, note that if the set of fixed points of $\varphi$ is non-empty then it is a subshift, and so, by minimality, must be all of $\Sigma$. Accordingly, by Lemma~\ref{lem:compact}, we can enlarge $K$ (while keeping it finite) so that $\sigma_K \neq \varphi(\sigma)_K$ for all $\sigma \in \Sigma$ and $\varphi \in \Phi$.

To prove the claim we proceed to find partially defined maps which satisfy the assumptions in Lemma~\ref{lem:Kate} for $\Phi,\eps$. Choose $k$ large enough so that $N_\Sigma(\cup_{g \in K}g F_{k})/N_\Sigma(F_k)<1+\eps$. Denote $F = F_k$ and $\tilde F = \cup_{g \in K}g F_{k}$.

For every $\varphi \in \Phi$, there is a natural map $\varphi' \colon \Sigma_{\tilde F} \to \Sigma_{F}$ which, given $\sigma \in \Sigma$, maps the configuration $\sigma_{\tilde F}$ to the configuration $\varphi(\sigma)_{F}$. This is well defined, since $K$ contains the memory set of $\varphi$, and hence $\varphi(\sigma)_F$ is determined by $\sigma_{\tilde F}$.

Since $\varphi$ is an automorphism, $\varphi'$ is surjective. Now we set $A$ to be $\Sigma_{\tilde F}$ and let the partially defined map $\tilde\varphi$ from $A$ to $A$ be given by $\tilde\varphi(a) = b$ whenever there exists a $\sigma \in \Sigma$ such that $a = \sigma_{\tilde F}$, and $b$ is the unique element of $A = \Sigma_{\tilde F}$ whose projection on $\Sigma_{F}$ is $\varphi'(a)$. This map is undefined when uniqueness fails.

We now prove that this map has the four properties required by Lemma~\ref{lem:Kate}.
\begin{enumerate}
    \item Since the projection map $\pi:\Sigma_{\tilde F}\to \Sigma_{F}$ is surjective, and since $N_\Sigma(\tilde F)/N_\Sigma(F)<1+\eps$ there can be at most $\eps N_\Sigma(F)$ many elements in $\Sigma_F$ with more than one extension to $\Sigma_{\tilde F}$. Thus $\pi^{-1}$ is one-to-one on a $1-\eps$ fraction of $\Sigma_F$. Since $\varphi':\Sigma_{\tilde F}\to \Sigma_F$ is surjective, it follows that $\tilde\varphi$ is defined on a $(1-\eps)/(1+\eps)$ fraction of $A = \Sigma_{\tilde F}$.

    \item
    If $\sigma_{F} \in \Sigma_F$ has a unique extension to $\Sigma_{\tilde F}$ then that extension must be $\sigma_{\tilde F}$. Applying this to the identity of $\aut(G,\Sigma)$ yields the desired condition.
    \item
    Suppose $\tilde\psi(a)$, $\tilde\varphi(\tilde \psi(a))$ and $\widetilde{\varphi\psi}(a)$ are all defined. 
    We show that $\tilde\varphi(\tilde\psi(a))=\widetilde{\varphi\psi}(a)$. 
    
    Note that for any $\eta \in \Phi$ and $\sigma_{\tilde F} \in A$, if $\tilde\eta(\sigma_{\tilde F})$ is defined, then $\tilde\eta(\sigma_{\tilde F}) = \eta(\sigma)_{\tilde F}$. Applying this to $\psi$, $\varphi$ and $\varphi\psi$ we get that for $a = \sigma_{\tilde F}$
    \begin{align*}
        \tilde\varphi(\tilde\psi(\sigma_{\tilde F})) = \tilde\varphi(\psi(\sigma)_{\tilde F}) = \varphi\psi(\sigma)_{\tilde F} = \widetilde{\varphi\psi}(\sigma_{\tilde F}).
    \end{align*}

    
    \item 
    The fourth condition follows from the fact that $K \subseteq \tilde F$, and the defining property of $K$ that ensures that $\sigma_K$ and $\varphi(\sigma)_K$ differ.
\end{enumerate}
We have thus proved that all of the conditions of Lemma~\ref{lem:Kate} hold, and so and the group is sofic. 
\end{proof}
Theorem~\ref{cor:abelian} is an easy corollary of  Theorem~\ref{thm:Sofic}, as, by the same argument as in the proof of Theorem~\ref{thm:zero-ent}, every zero entropy subshift must satisfy
\begin{align*}
     \liminf_n \frac{N_\Sigma\left(\cup_{g \in K}g F_n\right)}{N_\Sigma(F_n)} = 1.
\end{align*}

\bibliography{refs}
\end{document}